\numberwithin{equation}{section}
\numberwithin{figure}{section}
\newcommand\R{\mathbb{R}}
\newcommand\Z{\mathbb{Z}}
\newcommand\al{\alpha}
\newcommand\be{\beta}
\newcommand\gam{\gamma}
\newcommand\Gam{\Gamma}
\newcommand\lam{\lambda}
\newcommand\Lam{\Lambda}
\newcommand\Om{\Omega}
\newcommand\1{\mathds{1}}
\renewcommand\le{\leqslant}
\renewcommand\ge{\geqslant}
\newcommand\sbt{\subset}
\newcommand{\ft}[1]{\widehat{#1}}
\newcommand{\dotprod}[2]{\langle #1 , #2 \rangle}
\newcommand{\half}{\tfrac{1}{2}}
\theoremstyle{plain}
\newtheorem{thm}{Theorem}[section]
\newtheorem{lem}[thm]{Lemma}
\newtheorem{lemma}[thm]{Lemma}
\newtheorem{cor}[thm]{Corollary}
\newtheorem{problem}[thm]{Problem}
\newtheorem*{claim*}{Claim}
\newcommand{\thmref}[1]{Theorem~\ref{#1}}
\newcommand{\secref}[1]{Section~\ref{#1}}
\newcommand{\lemref}[1]{Lemma~\ref{#1}}
\newcommand{\probref}[1]{Problem~\ref{#1}}
\newcommand{\corref}[1]{Corollary~\ref{#1}}
\theoremstyle{definition}
\newtheorem*{definition*}{Definition}
\newtheorem*{remarks*}{Remarks}
\newtheorem*{remark*}{Remark}
\newenvironment{enumerate-roman}
{\begin{enumerate}
\addtolength{\itemsep}{5pt}
}
{\end{enumerate}}
\newenvironment{enumerate-alph}
{\begin{enumerate}
\addtolength{\itemsep}{5pt}
}
{\end{enumerate}}
\newenvironment{enumerate-num}
{\begin{enumerate}
\addtolength{\itemsep}{5pt}
}
{\end{enumerate}}
\newenvironment{enumerate-text}
{\begin{enumerate}
\addtolength{\itemsep}{5pt}
}
{\end{enumerate}}
\newcommand{\beql}[1]{\begin{equation}\label{#1}}
\newcommand{\eeq}{\end{equation}}
\newcommand{\RR}{{\mathbb R}}
\newcommand{\ZZ}{{\mathbb Z}}
\newcommand{\TT}{{\mathbb T}}
\begin{document}

\ifdefined\SMART
\title[Maximality, completeness of exponentials]{Maximality and completeness of orthogonal exponentials on the cube}
\else
\title{Maximality and completeness of orthogonal exponentials on the cube}
\fi

\author[M. Kolountzakis]{Mihail N. Kolountzakis}
\address{Mihail N. Kolountzakis: \newline Department of Mathematics and Applied Mathematics, University of Crete, Voutes Campus, 70013 Heraklion, Greece \newline and \newline Institute of Computer Science, Foundation of Research and Technology Hellas, N. Plastira 100, Vassilika Vouton, 700 13, Heraklion, Greece}
\email{kolount@gmail.com}

\author[N. Lev]{Nir Lev}
\address{Nir Lev: \newline Department of Mathematics, Bar-Ilan University, Ramat-Gan 5290002, Israel}
\email{levnir@math.biu.ac.il}

\author[M. Matolcsi]{M\'at\'e Matolcsi}
\address{M\'at\'e Matolcsi: \newline HUN-REN Alfr\'ed R\'enyi Mathematical Institute Re\'altanoda utca 13-15, H-1053, Budapest, Hungary and Department of Analysis \newline and \newline Operations Research, Institute of Mathematics, Budapest University of Technology and Economics, M\H uegyetem rkp. 3., H-1111 Budapest, Hungary.}
\email{matomate@renyi.hu}

\dedicatory{Dedicated to the memory of Bent Fuglede}
\date{October 16, 2024}
\subjclass[2020]{42B10, 42C05, 52C22}
\keywords{Spectral set, orthogonal exponentials, tiling, packing}
\thanks{N.L.\ is supported by ISF Grant No.\ 1044/21.
M.M.\ is supported by the Hungarian National Foundation for Scientific Research, Grants No. K132097, K146387.}

\begin{abstract}
It is possible to have a packing by translates of a cube that is maximal (i.e.\ no other cube can be added without overlapping) but does not form a tiling. In the long running analogy of packing and tiling to orthogonality and completeness of exponentials on a domain, we pursue the question whether one can have maximal orthogonal sets of exponentials for a cube without them being complete. We prove that this is not possible in dimensions $1$ and $2$, but is possible in dimensions $3$ and higher. We provide several examples of such maximal incomplete sets of exponentials, differing in size, and we raise relevant questions. We also show that even in dimension $1$ there are sets which are spectral (i.e.\ have a complete set of orthogonal exponentials) and yet they also possess maximal incomplete sets of orthogonal exponentials.
\end{abstract}

\maketitle

% ================================================

\section{Introduction}

\subsection{}
If $\Omega \subset \RR^d$ is a measurable set of finite measure, we call it \textit{spectral} if there exists a countable set $\Lambda \subset \RR^d$ such that the  system 
 of exponential  functions
\begin{equation}
\label{eqEXPSYS}
E(\Lam) = \{ e^{2 \pi i \dotprod{\lam}{x}} : \lam \in \Lam\}
\end{equation} 
forms an orthogonal basis in $L^2(\Omega)$. The set of frequencies $\Lambda$ is then called a \textit{spectrum} of $\Omega$. The properties of spectral sets, especially in comparison with properties of sets that tile by translations, have been a subject of intense research for decades (see, for instance, the recent survey \cite{kolountzakis2024orthogonal}). The Fuglede conjecture \cite{fuglede1974operators} stated that the spectral sets are precisely those that can tile the   space by translations (this means that one can find a collection of translates of the set such that almost every point of the space belongs to exactly one translate), but this conjecture is now known to be false in dimensions $3$ and higher \cite{tao2004fuglede}, \cite{matolcsi2005fuglede4dim}, \cite{kolountzakis2006tiles}, \cite{farkas2006onfuglede}, \cite{farkas2006tiles}. Still a lot more is known and continues to be discovered about the connection of spectrality to tiling, a major recent result being the truth of the Fuglede conjecture for convex sets \cite{lev2022fuglede}.

For the unit cube $Q = [-\half, \half]^d$, which is of course both a spectral set and a translational tile in $\RR^d$, a lot more is known. In particular it is known that the spectra $\Lambda$ of $Q$ are precisely the tiling complements of $Q$, namely the translation sets $\Lambda \subset \RR^d$ such that $\{Q+\lambda\}$,  $\lambda \in \Lambda$, constitutes a tiling \cite{lagarias2000orthonormal}, \cite{iosevich1998spectral}, \cite{kolountzakis2000packing}.

For any set $\Omega \subset \RR^d$ the two exponentials $e^{2\pi i \dotprod{\lam}{x}}$ and $e^{2\pi i \dotprod{\mu}{x}}$ are easily seen to be orthogonal in $L^2(\Omega)$ if and only if $\ft{\1}_\Omega(\lambda-\mu) = 0$,
where $\ft{\1}_\Omega(\xi)
= \int_{\Omega} 
e^{- 2\pi i \dotprod{\xi}{x}} dx$
is the Fourier transform of the
  indicator function $\1_\Om$.
  Since the unit cube $Q$ is a product set, we can easily compute  $\ft{\1}_Q$ and find that its zero set is  
\begin{equation}
\label{zero-set}
G = \{(\xi_1, \xi_2, \ldots, \xi_d) \in \RR^d: \text{there is $j$ such that $\xi_j \in \ZZ\setminus\{0\}$} \}.
\end{equation}
At the same time a collection of translates $\{Q+t\}$,  $t \in T$, forms a packing (i.e.\ the translates overlap at measure zero only) if and only if $(T-T) \cap (-1, 1)^d = \{0\}$. Hence if $\Lambda$ is an orthogonal set for the cube $Q$  then $\Lambda-\Lambda \subset  G \cup \{0\}$, which in turn implies, via the characterization \eqref{zero-set},  that $\{Q+\lambda\}$, $\lambda \in \Lambda$, is a packing. In short, any orthogonal set for $Q$ is also a packing set for $Q$ (note, however, that the converse is not true).

\subsection{}
We now come to the question of maximality. A packing of $Q$ is called \textit{maximal} if it is not possible to add another translated copy of $Q$ so that it remains a packing. Clearly every tiling is a maximal packing. It is easy to see though that there exist maximal packings of $Q$ by a set of translates $T$ which are not tilings. For instance, in dimension one, consider $T$ to contain all numbers of the form $\pm (n-\frac1{4})$, $n=1,2,3,\dots$. The same is true in all dimensions.

An orthogonal set of frequencies $\Lambda$ is similarly called maximal if it is not possible to add another point to it so that it remains orthogonal. Any spectrum of $Q$ (i.e.\ an orthogonal and complete set of frequencies) is of course maximal. But now it is not clear whether maximality fails to imply completeness, just as maximality of a packing fails to imply tiling.

In fact, it is easy to see that in dimension one, the maximality of an orthogonal set of frequencies does imply completeness. Indeed, if the exponential system $E(\Lam)$
is orthogonal in $L^2([-\half, \half])$ then $\Lambda-\Lambda \subset \ZZ$. Fix $\lambda_0 \in \Lambda$. It follows that $\Lambda \subset \lambda_0+\ZZ$ and, since all frequencies in $\lambda_0+\ZZ$ are orthogonal to each other, maximality of $\Lambda$ implies that $\Lambda = \lambda_0+\ZZ$, that is, $\Lambda$ is a spectrum of $[-\half, \half]$.

Is the same still true in higher dimensions? That is, if $\Lam$ is a maximal orthogonal set for the unit cube in $\R^d$, must it be also complete?
In \S\ref{sec:2d} we show that every maximal orthogonal set for the square in $2$ dimensions is also complete, just as in dimension $1$. Then in \S\ref{sec:thick} we show that this is not the case in dimension $3$. The maximal incomplete set we find there has the maximum possible density. In \S\ref{sec:thin} we give an alternative construction, again in dimension $3$, where our maximal set is now much thinner, almost a finite union of planes. We then show in \S\ref{sec:higher} how we can extend the results from dimension $3$ to higher dimensions. In \S\ref{sec:properties} we examine some general properties that maximal orthogonal sets for the cube must have. Closing, in \S\ref{sec:no-cube} we show that if we are willing to examine spectral sets other than the cube then even in dimension $1$ we can find an example of a spectral set which has a maximal incomplete set of exponentials.

% ================================================

\section{Maximal orthogonal sets in two dimensions}
\label{sec:2d}

First we show that in two dimensions, the maximality of an orthogonal set for the unit square implies completeness, similar to the one-dimensional case.

\begin{thm}
\label{thm2DIM}
Any maximal orthogonal set  for the unit square
in $\R^2$ is also complete,  i.e.\ it is a 
spectrum. More generally,  any orthogonal set
for the unit square  can be embedded as a
subset of some spectrum.
\end{thm}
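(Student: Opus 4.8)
The plan is to prove the stronger embedding statement first, since it immediately yields the maximality claim: if $\Lambda$ is maximal orthogonal and $\Lambda \subseteq S$ for some spectrum $S$, then $S$ is itself an orthogonal set, so no point of $S \setminus \Lambda$ can be adjoined to $\Lambda$ without destroying orthogonality; maximality then forces $S = \Lambda$, and hence $\Lambda$ is complete. Throughout I would work with the zero set $G$ of \eqref{zero-set} and the characterization, recalled in the introduction, that a set is a spectrum of $Q$ precisely when it is a tiling complement of $Q$. Thus, to certify completeness of a set $S$ that I construct, it suffices to verify that the translates $\{Q + s\}$, $s \in S$, tile $\RR^2$.

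The key structural step, which I expect to carry the real content, is a rigidity dichotomy: any orthogonal set $\Lambda$ for the square has the property that either all of its first coordinates lie in a single coset $\alpha + \ZZ$, or all of its second coordinates lie in a single coset $\beta + \ZZ$. To prove this, suppose the first coordinates are not confined to one coset, so there exist $\lambda, \mu \in \Lambda$ with $\lambda_1 - \mu_1 \notin \ZZ$. Since $\Lambda - \Lambda \subseteq G \cup \{0\}$, orthogonality of this pair forces $\lambda_2 - \mu_2 \in \ZZ \setminus \{0\}$, so $\lambda_2$ and $\mu_2$ share a common coset $\beta + \ZZ$. Now for an arbitrary $\nu \in \Lambda$, its first coordinate can agree modulo $\ZZ$ with at most one of $\lambda_1, \mu_1$, since these lie in distinct cosets; hence $\nu$ differs from $\lambda$ or from $\mu$ by a non-integer first coordinate, and orthogonality forces the corresponding second-coordinate difference into $\ZZ \setminus \{0\}$, placing $\nu_2$ in $\beta + \ZZ$ as well. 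This establishes the dichotomy.

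With the dichotomy in hand the construction is routine. After swapping the two axes if necessary and translating $\Lambda$ (both of which preserve orthogonality as well as the property of being a spectrum), I may assume every $\lambda \in \Lambda$ has $\lambda_1 \in \ZZ$. Orthogonality then reduces to a per-column condition: two points in distinct columns $\{\lambda_1 = n\}$ are automatically orthogonal because their first coordinates differ by a nonzero integer, while within a single column the second coordinates constitute a one-dimensional orthogonal set and hence lie in a common coset $\beta_n + \ZZ$ (exactly the one-dimensional argument given in the introduction). I would then complete each column independently, setting $S = \bigcup_{n \in \ZZ} \{n\} \times (\beta_n + \ZZ)$, with $\beta_n = 0$ chosen arbitrarily for empty columns. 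This $S$ contains $\Lambda$ and is orthogonal, and for each fixed $n$ the squares centered on $\{n\} \times (\beta_n + \ZZ)$ tile the vertical strip $[n - \half, n + \half] \times \RR$; since these strips tile the plane, $S$ is a tiling complement of $Q$ and therefore a spectrum. The only points needing extra care are the degenerate cases of an empty $\Lambda$ or one already contained in a single coset $(\alpha,\beta) + \ZZ^2$, which are disposed of by completing directly to a translate of $\ZZ^2$.
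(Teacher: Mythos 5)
Your proof is correct, and its overall skeleton coincides with the paper's: establish a dichotomy forcing all of $\Lam$ into a single coset of $\Z \times \R$ or of $\R \times \Z$, observe that within each column the second coordinates lie in a common coset of $\Z$, and then complete column-by-column to a ``shifted lattice'' spectrum $\bigcup_n \{n\}\times(\beta_n+\Z)$. The difference is that you make both key ingredients self-contained where the paper outsources them. For the dichotomy, the paper invokes an abstract lemma (if $X-X\subset H_1\cup H_2$ for subgroups $H_1,H_2$ then $X-X$ lies in one of them), whereas you give a direct two-point pivot argument: a pair $\lambda,\mu$ with $\lambda_1-\mu_1\notin\Z$ forces $\lambda_2,\mu_2$ into a common coset, and every other $\nu$ must clash in the first coordinate with at least one of $\lambda,\mu$, dragging $\nu_2$ into that same coset. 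This is in effect an elementary proof of the subgroup lemma in the special case needed. For the final step, the paper cites Jorgensen--Pedersen for the fact that any set of the form $\{(n,k+t(n)) : n,k\in\Z\}$ is a spectrum of the square, whereas you certify completeness via the tiling characterization (columns of unit squares tile vertical strips, strips tile the plane), which is legitimate given the spectrum--tiling equivalence for the cube recalled in the introduction. Your version is longer but fully elementary; the paper's is shorter at the cost of two external references.
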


The proof will require
a lemma which
can be found in 
 \cite[Observation 1]{kolountzakis2000structure} as well as in \cite[Lemma 11.4]{greenfeld2017fuglede}.

\begin{lem} \label{lemG5}
Let $X$ be a subset of an abelian group $H$, and let $H_1$ and $H_2$ be two subgroups of $H$. Assume that $X-X\subset H_1\cup H_2$. Then $X-X\subset H_1$ or $X-X\subset H_2$. 
\end{lem}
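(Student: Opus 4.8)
The plan is to reduce the statement about the difference set $X-X$ to a statement about $X$ itself by a harmless translation, and then to use the fact that membership of a single difference $u-v$ in one of the two subgroups forces one of $u,v$ into the ``wrong'' subgroup. Note that one cannot simply invoke the slogan ``a group is not the union of two proper subgroups,'' because $X-X$ is in general not a subgroup and is not assumed to fill out $H_1\cup H_2$; the translation reduction below is exactly what repairs this.

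First I would dispose of the trivial case $X=\emptyset$, where $X-X=\emptyset\subset H_1$. So assume $X\neq\emptyset$ and fix some $x_0\in X$. Since $X-X$ is invariant under replacing $X$ by $X-x_0$, and neither the hypothesis nor the conclusion is affected by this replacement, I may assume $0\in X$. Then each $x\in X$ satisfies $x=x-0\in X-X\subset H_1\cup H_2$, so in fact $X\subset H_1\cup H_2$. Because $H_1$ and $H_2$ are subgroups, hence closed under subtraction, I get the equivalence $X-X\subset H_i\iff X\subset H_i$ for $i=1,2$: the forward implication follows from $X\subset X-X$ (as $0\in X$), and the reverse from closure of $H_i$ under subtraction. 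Thus it suffices to prove that $X\subset H_1$ or $X\subset H_2$.

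For this I argue by contradiction. Suppose neither inclusion holds, and choose $u\in X\setminus H_1$ and $v\in X\setminus H_2$. Since $X\subset H_1\cup H_2$, necessarily $u\in H_2$ and $v\in H_1$. Now consider $u-v\in X-X\subset H_1\cup H_2$. If $u-v\in H_1$, then $u=(u-v)+v\in H_1$ because $v\in H_1$ and $H_1$ is a subgroup, contradicting $u\notin H_1$. If instead $u-v\in H_2$, then $v=u-(u-v)\in H_2$ because $u\in H_2$ and $H_2$ is a subgroup, contradicting $v\notin H_2$. Either way we reach a contradiction, so $X\subset H_1$ or $X\subset H_2$, which by the equivalence above yields $X-X\subset H_1$ or $X-X\subset H_2$. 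The only genuinely load-bearing step is the translation normalization that turns the hypothesis $X-X\subset H_1\cup H_2$ into the directly usable membership $X\subset H_1\cup H_2$; once that is in place, the case analysis on $u-v$ is forced and elementary.
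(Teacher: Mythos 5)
Your proof is correct, and every step checks out: the translation normalization is legitimate (replacing $X$ by $X-x_0$ changes neither $X-X$ nor the hypotheses), the equivalence $X-X\subset H_i\iff X\subset H_i$ is justified exactly as you say, and the two-point contradiction argument is airtight. Note that the paper itself does not prove this lemma at all — it is quoted with citations to \cite[Observation 1]{kolountzakis2000structure} and \cite[Lemma 11.4]{greenfeld2017fuglede} — so there is no in-paper proof to compare against. Your argument is essentially the standard one found in those references, but your version is arguably cleaner: by first arranging $0\in X$, you replace what is usually a four-point case analysis (picking $a-b\in H_1\setminus H_2$ and $c-d\in H_2\setminus H_1$ and chasing differences among $a,b,c,d$ through nested cases) with a two-point argument, since the elements of $X$ themselves become differences. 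This is a genuine simplification with no loss of generality, and it would serve as a perfectly good self-contained proof if the paper had chosen to include one.
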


Now let $\Lam \sbt \R^2$
be an orthogonal set for the unit square.
Then  $\Lam - \Lam \sbt G \cup \{0 \}$,
where $G$ is the zero set from \eqref{zero-set}
(with $d=2$). It follows that if we define
$H_1 = \Z \times \R$ and $H_2 = \R \times \Z$,
which are both subgroups of $\R^2$, then 
$\Lam - \Lam \sbt H_1 \cup H_2$.
\lemref{lemG5} then implies that
 $\Lam  - \Lam \subset \Z \times \R$ or 
$\Lam  - \Lam \subset \R \times \Z$.

Let us consider the case where
 $\Lam  - \Lam \subset \Z \times \R$
 (the other case is similar).
By translating the set $\Lambda$ we may assume that it contains the origin, so this  implies that
$\Lam \subset \Z \times \R$.
We now observe that if
  $(n,t)$ and $(n,s)$ are two distinct
 points in $\Lam$, then orthogonality implies 
 that $t-s$ must be an integer. Hence 
 $\Lam$ is contained in a set of the form
 \begin{equation}
 \label{eq:squarespectrum}
     \{(n, k + t(n)) : n,k \in \Z \}
 \end{equation}
where $t(n)$ are real numbers.
But it is known, see
\cite[Theorem 5]{JP99}, that
any set of the form
 \eqref{eq:squarespectrum}
is a spectrum of the unit square.
 Hence $\Lam$ can be embedded as a
subset of a spectrum.
In particular, $\Lam$ is maximal
if and only if it coincides with 
 a  spectrum.

% ================================================

\section{A maximal incomplete orthogonal set 
in three dimensions}
\label{sec:thick}

\subsection{}
Next we show that \thmref{thm2DIM} does not extend to three dimensions. Namely, we will prove the following result.

\begin{thm}
\label{thm:thick}
There exists an
orthogonal set $\Lam \sbt \R^3$
for the unit cube,
which is maximal but incomplete.
\end{thm}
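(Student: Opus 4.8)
The plan is to construct an explicit orthogonal set $\Lam \subset \R^3$ for the cube $Q=[-\half,\half]^3$ and verify two things: that no further point can be added while preserving orthogonality (maximality), yet $E(\Lam)$ does not span $L^2(Q)$ (incompleteness). Recall that $\Lam$ is orthogonal iff $\Lam - \Lam \subset G \cup \{0\}$, where by \eqref{zero-set} a nonzero difference lies in $G$ precisely when one of its coordinates is a nonzero integer. The guiding idea is the packing analogy from the introduction: I want a configuration that ``locally looks like'' the standard integer lattice spectrum $\Z^3$ in enough directions to block any new point, but which has been perturbed or punctured so that completeness fails. A natural first attempt is to take the bulk of $\Lam$ to be the lattice $\Z^3$ (which is a spectrum, hence complete and maximal on its own) and then \emph{remove} a sublayer of points and \emph{replace} it by a shifted copy, using the fractional slack available in the cube.

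The concrete construction I would try: fix one coordinate direction, say the third, and split $\Z^3$ by the value of the third coordinate. On the slab $\{x_3 \in \Z\}$ keep the integer points, but on a chosen slab, say $x_3 = \half$-type shifted layers, place points of the form $(n_1 + \tfrac14, n_2, \tfrac14)$ or a similar non-integer shift so that these shifted points remain mutually orthogonal (their pairwise differences still have an integer first or second coordinate) and remain orthogonal to the retained lattice points (the third-coordinate differences $\tfrac14 - k$ are never integers, so orthogonality must be forced through the first two coordinates). The delicate balance is to arrange the shifts so that the shifted layer is orthogonal to everything already present, which constrains how the layers can interlock.

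The heart of the argument — and the step I expect to be the main obstacle — is proving \textbf{maximality}: that every candidate new point $\mu = (\mu_1,\mu_2,\mu_3) \in \R^3$ fails to be orthogonal to at least one element of $\Lam$. For this I would argue by cases on $\mu_3$. If $\mu_3 \in \Z$, then $\mu$ differs from some integer layer of $\Lam$ only through the first two coordinates, and I reduce to a two-dimensional maximality statement (of the kind handled in \S\ref{sec:2d}, where maximal orthogonal sets for the square are spectra) to force $\mu \in \Lam$. If $\mu_3 \notin \Z$, then $\mu$ cannot be orthogonal to the integer layers via the third coordinate, so orthogonality would have to come from the first two coordinates against \emph{every} retained integer point simultaneously; I would show this is impossible because the projection of the integer part of $\Lam$ onto the first two coordinates is all of $\Z^2$, whose only common orthogonality-forcing differences cannot cover an arbitrary $(\mu_1,\mu_2)$. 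The shifted layer must then be invoked to rule out the remaining residue classes of $\mu_3$.

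Finally, \textbf{incompleteness} should be the easier half. I would exhibit a nonzero $f \in L^2(Q)$ orthogonal to every $e^{2\pi i \dotprod{\lam}{x}}$, $\lam \in \Lam$; equivalently, show that the closed span of $E(\Lam)$ omits some exponential-type direction. Because a genuine spectrum of $Q$ must be a tiling complement (as recalled in the introduction via \cite{lagarias2000orthonormal,iosevich1998spectral,kolountzakis2000packing}), it suffices to verify that the packing $\{Q+\lam\}_{\lam\in\Lam}$ is \emph{not} a tiling: the fractional shift $\tfrac14$ in the third coordinate leaves a gap in the slab structure that no retained point fills, so the translates of $Q$ do not cover $\R^3$. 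Since spectrum $\Leftrightarrow$ tiling complement for the cube, non-tiling immediately yields non-spectrality, and as $\Lam$ is orthogonal and maximal but not a spectrum, it is the desired maximal incomplete set. The main work, as noted, is the case analysis establishing maximality; getting the shift parameter and the layer structure to simultaneously satisfy orthogonality and block all extensions is where the construction must be tuned.
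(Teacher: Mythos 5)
There is a genuine gap, and it is located exactly where you flag the ``delicate balance'': the concrete construction you propose cannot be made orthogonal. Take your inserted point $(n_1+\tfrac14,\,n_2,\,\tfrac14)$ and any retained lattice point $(m_1,m_2,m_3)\in\Z^3$. The first and third coordinates of the difference are never integers, so orthogonality can only be forced by the second coordinate, i.e.\ you need $n_2-m_2$ to be a \emph{nonzero} integer. Since $n_2$ ranges over all of $\Z$, each inserted point forces you to delete the entire hyperplane $\{x_2=n_2\}$ from the lattice part, and the full shifted layer forces you to delete all of $\Z^3$. The same obstruction hits any inserted point with two or more non-integer coordinates whose remaining integer coordinates take values that are still occupied by the lattice part. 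This is not a tuning issue but a structural one: an inserted point can coexist with a large chunk of $\Z^3$ only if it carries an integer coordinate whose value has been \emph{vacated} from the lattice in that coordinate direction. The paper's construction in \S\ref{sec:thick} is built around precisely this observation: it keeps $B=(\Z\setminus\{0\})^3$ (so the value $0$ is vacated in every coordinate) and inserts points such as $(0,\beta-k,\gamma)$ whose integer coordinate equals the vacated value $0$, making orthogonality to $B$ automatic via the first coordinate. Your ``remove one layer in the $x_3$ direction, insert a shifted layer'' scheme cannot reach this configuration.

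The downstream parts of your plan inherit the problem. The maximality argument is only a case split on $\mu_3$ with the key steps deferred: the reduction to two-dimensional maximality when $\mu_3\in\Z$ does not follow (the cross-section of $\Lam$ at a fixed integer height need not be a maximal set for the square, and \thmref{thm2DIM} says maximal two-dimensional sets are spectra, which would not force $\mu\in\Lam$ anyway), and the case $\mu_3\notin\Z$ relies on the lattice part projecting onto all of $\Z^2$, which is exactly what the necessary deletions destroy. By contrast, the paper isolates the maximality mechanism in \lemref{lem:DBZ} (any point orthogonal to all of $B$ must have a zero coordinate) and then uses the three inserted lines to eliminate every such candidate. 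Your incompleteness argument via non-tiling is the right idea and matches the paper's, but it too cannot be checked until the construction is fixed. To repair the proposal you would need to abandon the single-direction layer swap and adopt something like the paper's symmetric deletion of the three coordinate hyperplanes together with inserted points anchored at the vacated coordinate value.
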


Our strategy is to start from the integer lattice $\Z^3$ which is a spectrum for the unit cube. We then remove a small part of this spectrum, contained in the union of the three coordinate planes, which leaves us with a subset $B$ of the integer lattice. The set $B$ is still orthogonal, but of course it is not maximal as it embeds in a spectrum. However, we will show that one can add a small set $A$ to $B$ such that the orthogonality is still preserved, but at the same time it eliminates the possibility of further extending $A \cup B$ to a spectrum, or stronger, to any larger orthogonal set.

\subsection{}
We now turn to the details of the construction.
Fix three real numbers $\al, \be, \gam$ such that none of them is an integer. 
Let $A$ be the set in $\R^3$ consisting of all points that have 
one of the following forms:
\begin{equation}
(0, \beta-k, \gam),  \quad (\alpha, 0, \gam-k), \quad (\alpha-k, \beta, 0)
\end{equation}
where $k$ is a nonzero integer.
 Notice that $A$ is contained in the union of three lines.

Let $B$ be the set of all vectors
in $\R^3$ whose coordinates are 
nonzero integers. 
This set contains all integer vectors
except those lying in a union of three planes.

The sets $A$ and $B$ are disjoint.
Let $\Lam = A \cup B$ be their union.
It is straightforward to check that 
$\Lam -  \Lam \sbt G \cup \{0\}$, 
where $G$ is the zero set from \eqref{zero-set}
(with $d=3$). Hence
$\Lam$ is an orthogonal set for the unit cube $Q$ in $\R^3$. In particular, $Q + \Lam$ is a packing.

On the other hand we claim that $\Lam$ is not a complete set of frequencies. Indeed, if it was complete  then $Q + \Lam$ would be a tiling, see
 \cite{lagarias2000orthonormal}, \cite{iosevich1998spectral}, \cite{kolountzakis2000packing}.
 But notice that each one of the three slabs
\begin{equation}
[-\half, \half] \times \R \times \R, \quad
\R \times [-\half, \half] \times \R, \quad
\R \times \R \times [-\half, \half]
\end{equation}
is scarcely covered by the cubes in the packing $Q + \Lam$, so this packing is not a tiling.

\subsection{}
To continue we need the following lemma. 

\begin{lem}
\label{lem:DBZ}
Let $s  \in \R^3$ and suppose that $s - B \sbt G$. 
Then at least one of the coordinates of $s$ is zero. 
\end{lem}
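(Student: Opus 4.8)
The plan is to argue by contraposition: assuming that every coordinate of $s$ is nonzero, I will exhibit a single point $b \in B$ for which $s - b \notin G$, contradicting the hypothesis $s - B \sbt G$. The key preliminary step is to unwind coordinatewise what membership in $G$ means. Since each coordinate $b_j$ of a point $b \in B$ is a nonzero integer, the $j$-th coordinate $s_j - b_j$ of $s - b$ lies in $\Z \setminus \{0\}$ if and only if $s_j \in \Z$ and $b_j \neq s_j$. Hence, by the definition of $G$ in \eqref{zero-set}, we have $s - b \in G$ precisely when there is some index $j$ with $s_j \in \Z$ and $b_j \neq s_j$; equivalently, $s - b \notin G$ exactly when $b_j = s_j$ for every index $j$ at which $s_j$ happens to be an integer.

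First I would introduce the set of indices $I = \{ j : s_j \in \Z \}$ at which $s$ has an integer coordinate. The observation above says that $s - B \sbt G$ forces, for every $b \in B$, the existence of some $j \in I$ with $b_j \neq s_j$. I then try to violate this by building a ``bad'' $b$ whose coordinates agree with $s$ on all of $I$: for $j \in I$ put $b_j = s_j$, and for $j \notin I$ put $b_j = 1$ (or any fixed nonzero integer). Under the standing assumption that no coordinate of $s$ vanishes, each $s_j$ with $j \in I$ is a \emph{nonzero} integer, so this choice produces a genuine point of $B$.

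The conclusion then follows by checking the coordinates of $s - b$ for this particular $b$: for $j \in I$ we have $s_j - b_j = 0$, while for $j \notin I$ we have $s_j \notin \Z$ and hence $s_j - b_j \notin \Z$; in either case no coordinate of $s - b$ lies in $\Z \setminus \{0\}$, so $s - b \notin G$, the desired contradiction. (The degenerate possibility $I = \emptyset$, where $s$ has no integer coordinate at all, is covered by the same construction with $b = (1,1,1)$.) I do not anticipate a serious obstacle here; the only point requiring care is the coordinatewise reformulation of $s - b \in G$, together with verifying that the constructed $b$ genuinely lies in $B$ --- and this last verification is exactly where the hypothesis that every coordinate of $s$ is nonzero is used, since it is what guarantees that $b_j = s_j$ is a nonzero integer for each $j \in I$.
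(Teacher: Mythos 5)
Your proof is correct. The coordinatewise reformulation is right: for $b\in B$ (all coordinates nonzero integers), $s-b\in G$ iff there is an index $j$ with $s_j\in\Z$ and $b_j\neq s_j$, so $s-B\sbt G$ fails exactly when some $b\in B$ matches $s$ on every integer coordinate of $s$; your choice $b_j=s_j$ for $j\in I$ and $b_j=1$ otherwise does lie in $B$ precisely because the contradiction hypothesis makes each $s_j$ with $j\in I$ a \emph{nonzero} integer, and then $s-b$ visibly avoids $G$. The paper reaches the same contradiction by a different, iterative route: it probes successively with $(1,1,1)$, then $(j,1,1)$, then $(j,k,1)$ to force each coordinate of $s$ in turn to be a nonzero integer, and finally tests against $s$ itself to get $0\notin G$. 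In effect the paper first proves that your index set $I$ must be all of $\{1,2,3\}$ and only then builds the bad $b$, whereas you observe that no such bootstrapping is needed --- the non-integer coordinates can be filled with any fixed nonzero integer. Your one-shot construction is slightly cleaner and has the added benefit of working verbatim for the analogous set $B$ in $\R^d$ for every $d$, while the paper's stepwise version is tailored to $d=3$ (though it extends by the obvious induction).
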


\begin{proof}
Let $s = (u,v,w)$ and 
suppose to the contrary that all the coordinates $u,v,w$ are nonzero. 
Since we have $(u,v,w) - (1,1,1) \in G$ then at least one of the 
coordinates $u,v,w$ must be an integer. Hence, say, $u=j$
for some nonzero integer $j$. Next, since
we also have $(u,v,w) - (j,1,1) \in G$,
then at least one of the other two coordinates $v,w$
must be an integer. Hence, say, $v=k$
for some nonzero integer $k$. Finally,  since
$(u,v,w) - (j,k,1) \in G$,
then $w$ must be an integer, so $w = l$ 
for some nonzero integer $l$. But this implies that
the zero vector
$(u,v,w) - (j,k,l)$ is in $G$, a contradiction.
\end{proof}

\subsection{}
Finally we show that $\Lam$
is a maximal orthogonal  set for the unit cube $Q$.
Suppose to the contrary that this is not the case,
and let
 $s = (u,v,w) \in \R^3$ be
a point not belonging to $\Lam$,
such that $\Lam \cup \{s\}$ is still an orthogonal set. In other words, this means that we have
$s - \Lam  \sbt G$.

In particular, we have $s - B \sbt G$, hence by \lemref{lem:DBZ}
at least one of the coordinates $u,v,w$ is zero. 
Let us consider the case where $u=0$ 
(the other two cases are similar).
We then have $(0,v,w)  - (0,\beta-k,\gam) \in G$ for every
nonzero integer $k$. This  implies that 
either $w - \gam$ is a nonzero integer,  or that $v = \beta$.

If $v = \beta$, then $s = (0,\beta,w)$. 
We now have $(0,\beta,w)  - (\alpha, 0, \gam-k) \in G$ for every 
nonzero integer $k$. This  implies that $w = \gam$.
We thus obtain $s = (0, \beta, \gam)$, but this
contradicts the condition that
$s - (\alpha-k, \beta, 0) \in G$ for every 
nonzero integer $k$. This case cannot therefore happen.

Hence we must have $w = \gam - l$ where $l$
is a nonzero integer. Then $s = (0, v, \gam - l)$.
The condition    $s  - (\alpha, 0, \gam-l) \in G$ 
now implies that $v = m$ for some nonzero integer $m$.
We therefore  obtain that $s = (0, m, \gam - l)$, but this again
contradicts the condition that
$s - (\alpha-k, \beta, 0) \in G$ for every 
nonzero integer $k$. Hence 
this case cannot happen either.

This shows that $\Lam$ is indeed 
 a maximal  orthogonal set for the unit cube.

% ================================================

\section{A ``thin'' maximal set in three dimensions}
\label{sec:thin}

\subsection{}
The maximal incomplete orthogonal set
constructed in \secref{sec:thick}
is, in a sense, almost as large as a 
spectrum. Indeed, this set contains
all integer vectors except those
lying in the union of the three 
coordinate planes, and
in particular, the set has 
the maximum possible density.

In this section we give an alternative
construction which, somewhat surprisingly,
produces a much thinner set.

\begin{thm}
\label{thm:thin}
There is an orthogonal 
set $\Lam \sbt \R^3$ for the unit cube,
which is maximal but
 contained in a finite union of 
  planes (and, as a consequence, is incomplete).
\end{thm}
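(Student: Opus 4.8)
The plan is to reuse the ``bulk plus blocker'' architecture of \secref{sec:thick}, but to choose the bulk so that it already lies in finitely many planes, at the price of making it much sparser. Concretely, I would look for $\Lam = A \cup B$ where $B$ is a union of a few two–dimensional lattices carried by coordinate–parallel planes (so that $\Lam$ automatically sits in a finite union of planes), and $A$ is again a low–dimensional \emph{blocking} set, arranged so that $\Lam - \Lam \subset G \cup \{0\}$ and so that no further point can be adjoined.

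First I would dispose of the two soft facts. Orthogonality is, as before, the inclusion $\Lam - \Lam \subset G \cup \{0\}$, which I would verify coordinatewise for the finitely many types of differences produced by $A$ and $B$. Incompleteness is then immediate from the hypothesis that $\Lam$ lies in a finite union of planes: by \cite{lagarias2000orthonormal,iosevich1998spectral,kolountzakis2000packing} a spectrum of $Q$ is exactly a tiling complement of $Q$, and a tiling complement has uniform (Beurling) density equal to $\vol(Q)^{-1} = 1$, whereas a set contained in finitely many hyperplanes has density $0$. Hence such a $\Lam$ can never be complete, which is precisely the parenthetical assertion in the statement.

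The real content is maximality, and here I would follow the two–stage pattern of \secref{sec:thick}. Stage one is a structural lemma in the spirit of \lemref{lem:DBZ}: from the assumption $s - B \subset G$ I want to deduce that the coordinates of a candidate point $s$ are heavily constrained, ideally that $s$ lies in a finite union of lines. The mechanism I would exploit is that orthogonality of $s$ to a \emph{full} planar lattice forces the transverse coordinate of $s$ to be a nonzero integer, whereas \emph{puncturing} that lattice along one coset (removing a line, the planar analogue of the holes that $B$ already has in \secref{sec:thick}) instead pins the corresponding coordinate of $s$ to a single value; iterating across the few planes of $B$ should collapse the candidate set to a one–parameter family. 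Stage two is to annihilate this family with $A$: for each surviving $s$ I must exhibit a point of $A$ whose difference with $s$ has no nonzero–integer coordinate and is nonzero, so that $\Lam \cup \{s\}$ is not orthogonal. The trick I would use is to place the blockers at mutually incommensurate offsets, so that two blockers whose relevant offsets differ by a non-integer jointly cover the whole family (the two forbidden cosets $c + (\ZZ\setminus\{0\})$ and $c' + (\ZZ\setminus\{0\})$ are disjoint precisely when $c - c' \notin \ZZ$); this is exactly what lets finitely many blocking lines suffice.

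The main obstacle is the tension between thinness and maximality. A sparse bulk leaves vast room to extend: once $B$ occupies only finitely many heights, every candidate whose transverse coordinate differs from each occupied height by a nonzero integer is automatically orthogonal to $B$, independently of its other coordinates, and such points fill infinitely many planes that cannot all be blocked by anything sharing the bulk's heights. This is why the blockers must be placed at non-integer (irrational) positions, and why $B$ must in turn be punctured to admit them at all, since a full lattice has no orthogonal companion off its own plane. Striking the balance, puncturing enough to host the blockers yet retaining enough lattice to force the candidates into a blockable low–dimensional family, is the crux; once the structural lemma is secured, the final case analysis should be routine, exactly as in \secref{sec:thick}.
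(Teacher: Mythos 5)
Your strategy points in the same direction as the paper's --- the thin example is indeed built from punctured translated planar lattices sitting at non-integer heights in the three coordinate directions, and incompleteness is indeed immediate once the set lies in finitely many planes --- but as written the proposal does not prove the theorem: the existence statement is never witnessed by a concrete set, and the step you defer (``striking the balance \dots is the crux'') is exactly where the content lies. Worse, the stage-one structural lemma you propose would fail. A single punctured planar lattice such as $B_1=\{(n,\be-k,\gam):\ n,k\in\Z\setminus\{0\}\}$ does not pin candidates down to a one-parameter family: every point with $w\in\gam+\Z\setminus\{0\}$ is orthogonal to all of $B_1$ (a union of infinitely many planes), and so is every point with $u=0$ and every point with $v=\be$. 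Even after intersecting the analogous conditions coming from three such families in the three coordinate directions, the point $(\al,\be,\gam)$ itself survives --- its difference with each family has a nonzero integer coordinate --- so no collection of punctured planar lattices alone collapses the candidate set to finitely many lines, and no ``low-dimensional blocking set'' placed afterwards can be expected to finish the job from that starting point.

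The missing ingredient is a single extra point that anchors the case analysis. The paper's set is
\[
\Lam=\{(0,0,0)\}\ \cup\ \{(n,\be-k,\gam)\}\ \cup\ \{(\al,n,\gam-k)\}\ \cup\ \{(\al-k,\be,n)\},
\qquad n,k\in\Z\setminus\{0\},
\]
with $\al,\be,\gam\notin\Z$. There is no bulk/blocker decomposition here: the three planar families block one another cyclically, and the origin forces any candidate $s$ to have at least one nonzero integer coordinate, which launches a finite case analysis (parallel to that of \secref{sec:thick}) ending in a contradiction in every branch; note that the origin is precisely what eliminates $(\al,\be,\gam)$. Your density argument for incompleteness is fine and matches the paper's (which simply observes that $Q+\Lam$ cannot tile when $\Lam$ lies in finitely many planes). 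The heuristics you give --- incommensurate offsets, puncturing the lattices so that blockers at non-integer transverse positions can coexist with them --- are the correct ones, but to have a proof you must exhibit an explicit configuration, add the anchoring point, and carry out the orthogonality and maximality verifications.
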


\subsection{}
Fix  again three real numbers $\al, \be, \gam$ such that none of them is an integer. 
Let $\Lam$ be the set in $\R^3$ consisting of all points that have 
one of the following forms:
\begin{equation}
\label{eq:thinpoints}
(0,0,0), \quad (n, \beta-k, \gam),  \quad (\alpha, n, \gam-k), \quad (\alpha-k, \beta, n)
\end{equation}
where $n$ and $k$ are nonzero integers.

 It is straightforward to check that 
$\Lam -  \Lam \sbt G \cup \{0\}$, hence
$\Lam$ is an orthogonal set for the unit cube $Q$  in $\R^3$. On the other hand, $\Lam$ is contained in the union of three planes plus the origin,
hence  $Q + \Lam$ is  not a tiling 
and  $\Lam$ is not a spectrum for $Q$.

\subsection{}
On the other hand we now claim that $\Lam$
is a maximal orthogonal set for the unit
cube. Indeed, if not then there is 
a point $s = (u,v,w) \in \R^3$ 
not belonging to $\Lam$,
such that $\Lam \cup \{s\}$ is still an
orthogonal set. In other words, this means that  
$s - \Lam  \sbt G$.

The condition $(u,v,w) - (0,0,0) \in G$ implies that 
 at least one of $u,v,w$ is a nonzero integer.
Let us consider the case where $u=j$ for some nonzero integer $j$
(the other two cases are similar).
We then have $(j,v,w)  - (j,\beta-k,\gam) \in G$ for every
nonzero integer $k$. This  implies that 
either $w - \gam$ is a nonzero integer,  or that $v = \beta$.

If $v = \beta$, then $s = (j,\beta,w)$. 
We have $(j,\beta,w)  - (\alpha, n, \gam-k) \in G$ for every two
nonzero integers $n$ and $k$. This  implies that $w = \gam$.
We thus obtain $s = (j, \beta, \gam)$, but this
contradicts the condition that
$s - (\alpha-k, \beta, n) \in G$ for every two
nonzero integers $n$ and $k$. So this case cannot happen.

Hence we must have $w = \gam - l$ where $l$
is a nonzero integer. Then $s = (j, v, \gam - l)$.
The condition $s  - (\alpha, n, \gam-l) \in G$ for every 
nonzero integer $n$ now implies that $v = 0$. 
We therefore  obtain that $s = (j, 0, \gam - l)$, but this again
contradicts the condition that
$s - (\alpha-k, \beta, n) \in G$ for every two
nonzero integers $n$ and $k$. It follows that 
this case cannot happen either.

This shows that $\Lam$ is indeed 
 a maximal  orthogonal set for the unit cube.

\subsection{}
We leave open the following question, which arises naturally.

\begin{problem}
\label{prob:lines}
Does there exist a  maximal 
orthogonal set $\Lam \sbt \R^3$ for the unit cube,
 such that $\Lam$ is contained in
a union of finitely many lines?
\end{problem}

% ================================================

\section{Extensions to higher dimensions}
\label{sec:higher}

We will now use 
the results of Sections \ref{sec:thick} and \ref{sec:thin} in order to construct examples of maximal  incomplete orthogonal sets for the cube also in dimensions greater than $3$.
We will present two methods that allow us to
 use lower-dimensional examples in order to produce examples in higher dimensions.

\subsection{}
Suppose that  $\Lam \sbt \R^n$ is a maximal 
 and incomplete orthogonal set for
 the unit cube in $\R^n$.
We will show how to use this set $\Lam$ in order to
construct a set $\Gam  \sbt \R^n \times \R^m$
 which is a maximal 
 and incomplete  orthogonal set for the unit cube
 in $\R^n \times \R^m$.

We take $\Gam = A \cup B$ to be
the union of two disjoint sets $A$ and $B$,
defined as follows. Let $A$ be 
 the set of all vectors 
 in $\R^n \times \R^m$  of the form
$(\lam, 0)$ where $\lam \in \Lam$.
We also let $B$ be the set of vectors
$(p,q) \in \Z^n \times \Z^m$
such that the vector $q$
has at least one nonzero coordinate,
i.e.\ $q$ is not the zero vector.

It is clear that $A$ and $B$ are indeed two
disjoint sets. It is also straightforward to check that their union $\Gam = A \cup B$ 
is an orthogonal set for the unit cube
 in $\R^n \times \R^m$.
 
\begin{thm}
If $\Lam \sbt \R^n$ is a maximal 
 and incomplete orthogonal set for
 the unit cube in $\R^n$, then
 the set $\Gam$ constructed above
  is a maximal 
 and incomplete  orthogonal set for the unit cube
 in $\R^n \times \R^m$.   
\end{thm}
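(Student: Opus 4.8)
The plan is to prove the two assertions of the theorem --- that $\Gam$ is \emph{incomplete} and that it is \emph{maximal} --- separately, following the same two-step scheme used in \secref{sec:thick}: first an analogue of \lemref{lem:DBZ} that pins down the new coordinates, then an appeal to the maximality of the given set $\Lam$.

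For \emph{incompleteness}, since $\Gam$ is orthogonal, $Q+\Gam$ is a packing of $\R^n\times\R^m$, so by the characterization of spectra of the cube (\cite{lagarias2000orthonormal}, \cite{iosevich1998spectral}, \cite{kolountzakis2000packing}) it is enough to show this packing is not a tiling. Write $Q=Q_n\times Q_m$ with $Q_n=[-\half,\half]^n$ and $Q_m=[-\half,\half]^m$. I would first observe that no cube $(p,q)+Q$ with $(p,q)\in B$ meets the open slab $\R^n\times(-\half,\half)^m$: for $y\in(-\half,\half)^m$ the only integer vector $q$ with $y\in q+Q_m$ is $q=0$, which is excluded from $B$. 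Hence inside this slab the only cubes present are the $(\lam,0)+Q$, $\lam\in\Lam$, and these cover (in the $\R^n$ direction) exactly $\Lam+Q_n$. Because $\Lam$ is orthogonal but incomplete, $\Lam+Q_n$ is a packing but not a tiling of $\R^n$, so its complement $S\sbt\R^n$ has positive measure; then $S\times(-\half,\half)^m$ is left uncovered and $Q+\Gam$ is not a tiling.

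For \emph{maximality}, suppose to the contrary that there is $s=(\sig,\tau)\in\R^n\times\R^m$, $s\notin\Gam$, with $\Gam\cup\{s\}$ orthogonal, i.e.\ $s-\Gam\sbt G$ (with $d=n+m$ in \eqref{zero-set}). The heart of the argument is the analogue of \lemref{lem:DBZ}: the condition $s-B\sbt G$ alone already forces $\tau=0$. To prove it I argue by contradiction. If $\tau\ne0$, I would choose $q_0\in\Z^m\setminus\{0\}$ and $p\in\Z^n$ so that the vector $(\sig-p,\tau-q_0)$ has \emph{no} nonzero integer coordinate whatsoever. The recipe is forced coordinate by coordinate: on an integer coordinate of $\tau$ one must take $q_0$ equal to that integer (killing the coordinate), while on non-integer coordinates $q_0$ is free; since $\tau\ne0$, this leaves room to keep $q_0\ne0$. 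The identical recipe yields $p\in\Z^n$ annihilating every integer coordinate of $\sig$. Then $(p,q_0)\in B$, yet $s-(p,q_0)\notin G$, contradicting $s-B\sbt G$. Hence $\tau=0$. Once this is known, $s=(\sig,0)$ with $\sig\notin\Lam$ (as $s\notin A$, and $s\notin B$ automatically since $\tau=0$). For each $\lam\in\Lam$ the last $m$ coordinates of $s-(\lam,0)=(\sig-\lam,0)$ vanish, so $s-(\lam,0)\in G$ forces $\sig-\lam$ to have a nonzero integer coordinate; that is, $\Lam\cup\{\sig\}$ is orthogonal for the unit cube in $\R^n$, contradicting the maximality of $\Lam$.

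The only genuinely new ingredient is the lemma $\tau=0$; the rest is either the reduction to tiling (for incompleteness) or a direct transfer to the maximality of $\Lam$. I expect the construction of the pair $(p,q_0)$ --- specifically, verifying that $q_0$ can be kept nonzero while simultaneously annihilating all \emph{nonzero} integer coordinates of $\tau$ --- to be the one point requiring a careful, though entirely routine, case split (according to whether $\tau$ has a nonzero integer coordinate or only non-integer nonzero coordinates).
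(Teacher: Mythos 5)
Your proof is correct and follows essentially the same route as the paper's: the same case split on whether the $\R^m$-component $\tau$ of the putative new point vanishes, the same construction of a nonzero $q\in\Z^m$ matching the integer coordinates of $\tau$, and the same reduction of the $\tau=0$ case to the maximality of $\Lam$. The only (harmless) deviations are that you close the $\tau\ne 0$ case by constructing one explicit $p\in\Z^n$ rather than quantifying over all $p\in\Z^n$ and invoking the completeness of the spectrum $\Z^n$, and that you prove incompleteness by directly exhibiting an uncovered slab of positive measure rather than arguing that completeness of $\Gam$ would force $\Lam$ to tile; both variants are valid.
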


 \begin{proof}
 Let us first show that $\Gam$ is maximal.
 Suppose to the contrary that this is not the case,
 so there is  a point  $(u,v) \in \R^n \times \R^m$
 which is orthogonal to both $A$ and $B$.
If $v$ is the zero vector, then 
  orthogonality  to  $A$  implies that
the vector $u$ is orthogonal to $\Lam$
(with respect to the
unit cube in $\R^n$) which
contradicts the maximality of  $\Lam$.
Hence this cannot happen, so
 $v$ must be a nonzero vector.
We now define a vector 
$q = (q_1, \dots, q_m)$ by taking
the coordinate  $q_j$ to
coincide with the corresponding
 coordinate $v_j$ of $v$ if it is
 an integer,
 and otherwise we let $q_j$ be an arbitrary
 nonzero integer.
This implies that $q$ is a
nonzero vector in $\Z^m$ which is not
orthogonal to $v$ (with respect to the
unit cube in $\R^m$). 
But now $(u,v)$ 
is orthogonal to all vectors of the
form $(p,q)$, $p \in \Z^n$, since these
vectors are in $B$. However this
 is possible only if $u$
is orthogonal to all  vectors
$p \in \Z^n$, which contradicts
the completeness of the spectrum
$\Z^n$ of the unit cube in $\R^n$.
This shows that  $\Gam$ is indeed
maximal.

It remains to show that  $\Gam$ is  not
complete. Indeed, if it was complete  
then  $\Gam$ would be a tiling set
for the unit cube
 in $\R^n \times \R^m$, 
 again due to the result in
  \cite{lagarias2000orthonormal}, \cite{iosevich1998spectral}, \cite{kolountzakis2000packing}.
In turn this would imply that $\Lam$
is a tiling set for the unit cube
 in $\R^n$. As a consequence,
 $\Lam$ would be a spectrum,
 contrary to our assumption.
\end{proof}

\subsection{}
 Another possibility for using
 lower-dimensional examples 
 in order to construct  examples
 in higher dimensions, is to 
  utilize cartesian products.
 
\begin{thm}    
\label{thm:maxprod}
Let
 $A \sbt \R^n$
 and
 $B \sbt \R^m$
 be two maximal orthogonal sets
for the unit cubes in $\R^n$ and
$\R^m$  respectively. Then the cartesian
product $A \times B$
 is a  maximal orthogonal set
for the unit cube in $\R^n \times \R^m$.
Moreover, $A \times B$ is a complete set if and only if 
both $A$ and $B$ are  complete sets.
\end{thm}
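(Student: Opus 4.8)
The plan is to establish the three assertions in turn: that $A \times B$ is orthogonal, that it is maximal, and finally that it is complete precisely when both factors are. Throughout, let $G_k$ denote the zero set \eqref{zero-set} in dimension $k$, so that a set $S \sbt \R^k$ is orthogonal for the unit cube in $\R^k$ exactly when $S - S \sbt G_k \cup \{0\}$; recall that $G_k$ is symmetric and consists of the vectors having at least one coordinate in $\Z \setminus \{0\}$, and write $Q_k = [-\half,\half]^k$. Orthogonality of the product is immediate: given two distinct points $(a,b)$ and $(a',b')$ of $A \times B$, at least one of $a \neq a'$, $b \neq b'$ holds; if $a \neq a'$ then $a - a' \in G_n$ contributes a nonzero integer coordinate to $(a - a', b - b')$, and symmetrically if $b \neq b'$, so in either case $(a-a', b-b') \in G_{n+m}$.

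For maximality I would suppose toward a contradiction that some $s = (u,v) \notin A \times B$ satisfies $s - (A \times B) \sbt G_{n+m}$, and argue in the case $u \notin A$, the case $v \notin B$ being symmetric after swapping the two blocks of coordinates. The crux is a two-step bootstrap. First, by maximality of $A$ there is $a_0 \in A$ with $u - a_0 \notin G_n$, and moreover $u - a_0 \neq 0$ since $u \notin A$; as $u - a_0$ then has \emph{no} nonzero integer coordinate, the membership $(u - a_0, v - b) \in G_{n+m}$ forces $v - b \in G_m$ for \emph{every} $b \in B$, whence $v - B \sbt G_m$ and so $v \in B$ by maximality of $B$. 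Second, taking $b_0 = v \in B$, for every $a \in A$ the vector $(u - a, 0) = (u-a,\, v - b_0)$ lies in $G_{n+m}$ with vanishing last $m$ coordinates, forcing $u - a \in G_n$; thus $u - A \sbt G_n$ and maximality of $A$ gives $u \in A$, contradicting $u \notin A$. I expect this to be the main obstacle, though it is more a matter of finding the right bookkeeping than a deep difficulty: the one genuinely non-routine point is recognizing that one must first deduce $v \in B$ and then feed the particular choice $b_0 = v$ back in to reach the contradiction.

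For the completeness equivalence I would invoke the characterization that, for the cube, completeness is equivalent to tiling \cite{lagarias2000orthonormal}, \cite{iosevich1998spectral}, \cite{kolountzakis2000packing}. Writing $f(x) = \#\{a \in A : x \in Q_n + a\}$ and $g(y) = \#\{b \in B : y \in Q_m + b\}$ for the covering multiplicities of the two factor packings, the covering multiplicity of $(x,y)$ by the translates $Q_{n+m} + (A \times B)$ factorizes as $f(x)\,g(y)$. Since each of $A$ and $B$ is orthogonal, hence a packing, $f$ and $g$ take values in $\{0,1\}$ almost everywhere, and therefore $f(x)g(y) = 1$ almost everywhere if and only if $f = 1$ a.e.\ and $g = 1$ a.e. One direction is immediate; for the other, a positive-measure set $E$ on which $f = 0$ would make the product multiplicity vanish on $E \times \R^m$, so the product would not be a tiling, and symmetrically for $g$. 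Hence $A \times B$ tiles if and only if both $A$ and $B$ tile, which by the cited equivalence is exactly the statement that $A \times B$ is complete if and only if both $A$ and $B$ are complete.

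(As an alternative to the tiling route for this last part, one could argue directly at the Hilbert-space level, using $L^2(Q_{n+m}) = L^2(Q_n) \otimes L^2(Q_m)$ and the fact that the product system $E(A \times B)$ consists of the tensor products $e^{2\pi i \dotprod{a}{x}} \otimes e^{2\pi i \dotprod{b}{y}}$, so that it is total exactly when each factor system is total; but the tiling argument is shorter and stays within the framework already used in the paper.)
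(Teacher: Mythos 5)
Your proof is correct, but it differs from the paper's in both halves, so a comparison is worth recording. For maximality the paper argues in a single step: by maximality of $A$ there is $a \in A$ not orthogonal to $u$ (i.e.\ $u-a$ has no nonzero integer coordinate; take $a=u$ if $u\in A$), and likewise $b \in B$ not orthogonal to $v$; then $(u-a,v-b)$ has no nonzero integer coordinate, so $(u,v)$ fails to be orthogonal to the point $(a,b) \in A\times B$, an immediate contradiction. Your two-step bootstrap reaches a contradiction too, but is longer than necessary: once you have $v - B \sbt G_m$ you are already done, since either $v \notin B$ (so $B \cup \{v\}$ is a larger orthogonal set, contradicting maximality of $B$) or $v \in B$ (so $0 = v-v$ would lie in $G_m$, which is false); the entire second step with $b_0 = v$ is therefore superfluous, though not wrong --- in a proof by contradiction a further valid deduction does no harm. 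For the completeness equivalence the paper simply cites \cite[Theorem 4]{JP99} and \cite[Lemma 2]{JP99}; your replacement of these citations by the factorization $f(x)g(y)$ of the covering function of the product packing, combined with the spectrum--tiling equivalence for the cube from \cite{lagarias2000orthonormal}, \cite{iosevich1998spectral}, \cite{kolountzakis2000packing} (which the paper uses freely elsewhere), is a genuinely self-contained alternative, and arguably more in keeping with the packing/tiling theme of the paper; the tensor-product remark you append is essentially the content of the cited results of \cite{JP99}. Both routes are sound.
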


\begin{proof}
It is straightforward to check that
 $A \times B$
 is an  orthogonal set
for the unit cube in $\R^n \times \R^m$.
Suppose  to the contrary that $A \times B$ is not 
 maximal, and let $(u,v) \in \R^n \times \R^m$ 
 be orthogonal to all the points of $A \times B$.
The  maximality of $A$
implies the existence of a point $a \in A$
which is not orthogonal to $u$.
Similarly, by the maximality of $B$
there is $b \in B$ which is not orthogonal to $v$.
It follows that  $(u,v)$
 is not orthogonal to 
 the point 
$(a,b)$ which belongs to $A \times B$,
a contradiction. Hence $A \times B$ must be
a maximal set.

Finally, it is well known,
see e.g.\ \cite[Theorem 4]{JP99},
that if both $A$ and $B$ are  complete sets,
then also $A \times B$ is complete.
The converse is also true, 
see \cite[Lemma 2]{JP99}.
\end{proof}

\subsection{}
It is natural to pose the following question, which generalizes \probref{prob:lines}.
Let us define  the
\emph{affine dimension} of a
discrete set
$\Lam \sbt \R^d$ to be the smallest
integer $k$ such that 
$\Lam$ can be covered by a
 finite number of  translated 
 $k$-dimensional subspaces.

\begin{problem}
How  small can be the
affine dimension of  a  maximal 
orthogonal set
for the unit cube in $\R^d$?
\end{problem}

If we write $d = 3q + r$ where
$r \in \{0,1,2\}$, then the set
$\Gam = \Lam \times \cdots \times \Lam \times \Z^r$
is a maximal incomplete orthogonal set for the unit 
cube in $\R^d$, where  $\Lam \sbt \R^3$
is the set from \thmref{thm:thin},
which appears $q$ times in the product.
We observe that this set $\Gam$
is contained in a finite union of translated
subspaces of dimension $k = 2q + r$.

% ================================================

\section{Some general properties of maximal orthogonal sets}
\label{sec:properties}

\subsection{}
In the previous sections,
we have constructed examples of 
maximal incomplete orthogonal sets
for the unit  cube in $\R^d$, $d \ge 3$.
Moreover, we have seen
  examples of rather ``thin''
maximal orthogonal sets.
A natural question arises as to
  how small can these sets be.
The following proposition 
gives a necessary condition on a maximal set,
which in particular shows that it
cannot be ``too small'' in a sense.

\begin{thm}    
\label{thm:coordinate}
Let $\Lam \sbt \R^d$ be a maximal orthogonal set
for the unit cube. Then for every point
$(\lam_1, \lam_2, \dots, \lam_d)
\in \Lam$ and for every integer $n$,
the number $\lam_1 + n$
must appear as the first
coordinate of some point from $\Lam$.
Similarly, $\lam_2 + n$
must appear as the second
coordinate of some point from $\Lam$,
and so on.
\end{thm}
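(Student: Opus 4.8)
The plan is to argue by contradiction using the maximality of $\Lam$. By the symmetry of the statement among the $d$ coordinates it suffices to treat the first coordinate, and since the case $n=0$ is trivial (the number $\lam_1$ is already the first coordinate of the point $\lam$ itself), I may assume that $n$ is a \emph{nonzero} integer. Suppose, toward a contradiction, that the number $\lam_1 + n$ does not occur as the first coordinate of any point of $\Lam$. The key idea is that one can produce a new point orthogonal to all of $\Lam$ simply by shifting $\lam$ by an integer in its first coordinate.

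Concretely, I would set
\[
s = (\lam_1 + n,\ \lam_2,\ \dots,\ \lam_d).
\]
Then $s \notin \Lam$, since its first coordinate $\lam_1 + n$ is assumed not to appear as a first coordinate anywhere in $\Lam$. The heart of the proof is to verify that $s$ is nonetheless orthogonal to every $\mu = (\mu_1, \dots, \mu_d) \in \Lam$, i.e.\ that $s - \mu \in G$. The crucial structural observation is that coordinates $2, \dots, d$ of $s - \mu$ coincide with those of $\lam - \mu$, while the first coordinate of $s - \mu$ equals $(\lam_1 - \mu_1) + n$.

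The verification splits into cases according to $\lam - \mu$. If $\mu = \lam$, then $s - \mu = (n, 0, \dots, 0)$, which lies in $G$ precisely because $n$ is a nonzero integer. If $\mu \neq \lam$, then orthogonality of $\Lam$ gives $\lam - \mu \in G$, so some coordinate of $\lam - \mu$ is a nonzero integer; should its index be $\ge 2$, it is inherited unchanged by $s - \mu$, placing $s - \mu$ in $G$. The only remaining possibility is that $\lam_1 - \mu_1$ is a nonzero integer while no coordinate of index $\ge 2$ is a nonzero integer. In that case $(\lam_1 - \mu_1) + n$ is again an integer, and it is nonzero \emph{unless} $\mu_1 = \lam_1 + n$ --- but the latter would exhibit $\lam_1 + n$ as the first coordinate of the point $\mu \in \Lam$, contradicting our hypothesis. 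Hence in every case $s - \mu \in G$, so $\Lam \cup \{s\}$ is an orthogonal set strictly larger than $\Lam$, contradicting maximality.

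I expect the last case of this verification to be the only delicate point: adding the integer $n$ to the first coordinate can destroy orthogonality with some $\mu$ \emph{only} when the sole orthogonalizing coordinate of $\mu$ is the first one and the shift cancels it exactly, i.e.\ $\mu_1 = \lam_1 + n$. Recognizing that this single bad scenario is precisely what the contradiction hypothesis forbids is the crux of the argument. Running the identical reasoning on each of the remaining coordinates then yields the full statement.
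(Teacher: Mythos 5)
Your proof is correct and is essentially the paper's argument: the paper first translates $\Lam$ so that the given point sits at the origin and then tests the candidate $\xi=(n,0,\dots,0)$, which is exactly your $s=\lam+(n,0,\dots,0)$ in normalized coordinates, and the case analysis (the orthogonalizing coordinate of $\mu$ has index $\ge 2$, versus it is the first coordinate and cannot be cancelled because $\lam_1+n$ is forbidden as a first coordinate) matches the paper's step for step. No gaps.
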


\begin{proof}
By translating the set $\Lam$ we may assume
that the point 
$(\lam_1, \lam_2, \dots, \lam_d)$ lies at the origin.
By symmetry it also suffices to prove that every integer $n$ must appear as the first coordinate of some point from $\Lam$.  

Suppose to the contrary that $n$ is an integer which does not appear as the first coordinate of any point from $\Lam$. We then claim that the point $\xi = (n,0,0,...,0)$ is  orthogonal to all points of $\Lam$, which contradicts  the maximality of $\Lam$. Indeed, $n$ must be a nonzero integer and hence $\xi$ is orthogonal to the origin. Now let $\mu$ be any point of $\Lam$ other than the origin. Then $\mu$ is orthogonal to the origin and so at least one of the coordinates of $\mu$ is a nonzero integer. If this coordinate is not the first one then $\xi$ is orthogonal to $\mu$ and we are done. Otherwise the first coordinate of $\mu$ is a (nonzero) integer, which cannot be equal to $n$ by assumption. This once again implies that $\xi$ is orthogonal to $\mu$, and so the assertion is established.
\end{proof}

 As an immediate  consequence of \thmref{thm:coordinate}  we obtain:
 
\begin{cor}
\label{cor:infinite}
Any maximal orthogonal set
for the unit cube must be an infinite set.
\end{cor}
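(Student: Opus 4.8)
The plan is to obtain \corref{cor:infinite} directly from \thmref{thm:coordinate}, the only preliminary being to rule out the empty set. First I would observe that a maximal orthogonal set $\Lam$ cannot be empty: the empty family is vacuously orthogonal, but it is certainly not maximal, since adjoining any single frequency yields a one-element system, which is trivially orthogonal. Hence $\Lam \neq \emptyset$, and we may fix some point $(\lam_1, \lam_2, \dots, \lam_d) \in \Lam$.

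Next I would apply \thmref{thm:coordinate} to this fixed point. For each integer $n$, the theorem guarantees that the real number $\lam_1 + n$ occurs as the first coordinate of some point of $\Lam$. As $n$ ranges over $\Z$, the values $\lam_1 + n$ are pairwise distinct, so they furnish infinitely many distinct first coordinates realized within $\Lam$. Since two points of $\R^d$ with different first coordinates are necessarily distinct, no single point of $\Lam$ can account for two of these values; consequently $\Lam$ must contain infinitely many points, which is the desired conclusion.

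I do not anticipate any genuine obstacle here, as the statement is an immediate corollary once \thmref{thm:coordinate} is in hand. The only point requiring a moment of care is the nonemptiness of $\Lam$, which is needed so that \thmref{thm:coordinate} can be invoked at all; as noted above, this is forced by the maximality hypothesis.
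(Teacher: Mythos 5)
Your argument is correct and coincides with the paper's, which derives the corollary as an immediate consequence of \thmref{thm:coordinate} in exactly the way you describe (the paper leaves the trivial nonemptiness check implicit). Nothing further is needed.
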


Moreover, it follows 
that a maximal orthogonal set 
cannot be ``too localized'' in space.
Specifically,
consider an axis-aligned
slab of width one,
i.e.\ a set of the form
\begin{equation}
\label{eq:axisslab}
 S_j(a) = \{ (x_1, x_2, \dots, x_d) : a \le x_j \le a+1\},
\quad 1 \le j \le d, \quad a \in \R. 
\end{equation}
Since any closed interval of length $1$
contains an integer, 
\thmref{thm:coordinate} implies:
\begin{cor}    
\label{cor:slab}
A maximal orthogonal set
for the unit cube must intersect
 any axis-aligned slab of width one.
\end{cor}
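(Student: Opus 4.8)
The plan is to deduce the statement directly from \thmref{thm:coordinate}, which is the substantive result; the corollary is essentially a repackaging. Fix an axis index $j$ with $1 \le j \le d$ and a real number $a$, and consider the slab $S_j(a)$. Since a maximal orthogonal set is nonempty, I would pick any point $\mu = (\mu_1, \dots, \mu_d) \in \Lam$ and focus on its $j$-th coordinate $\mu_j$.

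The key observation is that \thmref{thm:coordinate} guarantees that for every integer $n$, the number $\mu_j + n$ appears as the $j$-th coordinate of some point of $\Lam$. In other words, the set of $j$-th coordinates occurring among the points of $\Lam$ contains the entire coset $\mu_j + \Z$. Because this coset is an arithmetic progression with unit step, any closed interval of length one --- in particular $[a, a+1]$ --- must contain at least one of its members, say $\mu_j + n$ with $n \in \Z$. Applying \thmref{thm:coordinate} once more, there is then a point $\nu \in \Lam$ whose $j$-th coordinate equals $\mu_j + n$; and since $\mu_j + n \in [a, a+1]$ we have $\nu \in S_j(a)$. Hence $\Lam \cap S_j(a) \neq \emptyset$, which is exactly the assertion.

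There is no real obstacle here: the entire difficulty has already been absorbed into \thmref{thm:coordinate}, and the present step only invokes the elementary fact that a length-one interval meets every coset of $\Z$ in $\R$. The one point worth stating explicitly is that we are entitled to start from an arbitrary point of $\Lam$, which is legitimate precisely because $\Lam$ is nonempty --- indeed infinite, by \corref{cor:infinite}.
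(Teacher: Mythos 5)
Your argument is correct and is exactly the paper's own deduction: the paper likewise observes that any closed interval of length one meets the coset $\mu_j+\Z$ of $j$-th coordinates guaranteed by \thmref{thm:coordinate}, so some point of $\Lam$ lands in the slab. No difference in approach, only in the level of detail spelled out.
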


\subsection{}
Assume now that
 $\Lam$ is a \emph{finite} orthogonal set
for the unit cube in $\R^d$. Then 
$\Lam$ is not maximal (\corref{cor:infinite}).
A standard application of Zorn's lemma shows that $\Lam$
 can always be embedded as a subset of a maximal orthogonal set.
Is it always possible to
embed $\Lam$  as a subset of 
an orthogonal and complete set
(i.e.\ a  spectrum) of the   cube?

 \thmref{thm2DIM} tells us that
the answer is   `yes' in two dimensions.
Let us show that it is
 `no' in dimensions $3$ and higher:

\begin{thm}
There exists a finite 
orthogonal set
for the unit cube  in $\R^d$, $d \ge 3$,
which cannot be embedded 
as a subset of any spectrum of the cube.
\end{thm}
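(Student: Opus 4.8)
The plan is to reduce the statement to a compactness argument built on top of the infinite examples already constructed. First I would record the following reformulation: a finite orthogonal set $\Lam \sbt \R^d$ embeds in a spectrum of the cube if and only if the packing $Q + \Lam$ can be extended to a tiling of $\R^d$ by translates of $Q$. One direction is immediate, since a spectrum $T \supseteq \Lam$ is a tiling complement \cite{lagarias2000orthonormal,iosevich1998spectral,kolountzakis2000packing} and $Q + T$ is then a tiling containing $Q + \Lam$. Conversely, if $Q + T$ is a tiling with $T \supseteq \Lam$, then $T$ is again a spectrum by the same results, and $\Lam$, being a subset of the orthogonal set $T$, embeds in it.

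Next I would isolate an infinite obstruction. Let $\Lam \sbt \R^d$ with $d \ge 3$ be one of the maximal incomplete orthogonal sets produced in \secref{sec:thick} or \secref{sec:thin} (and promoted to higher dimensions in \secref{sec:higher}). I claim $\Lam$ itself lies in no spectrum: if $\Lam \sbt T$ with $T$ a spectrum, then $T$ is orthogonal, so by maximality of $\Lam$ there is no point of $T \setminus \Lam$, forcing $T = \Lam$; but $T$ is complete while $\Lam$ is not, a contradiction. Writing $\Lam = \bigcup_n \Lam_n$ as an increasing union of finite subsets (with $0 \in \Lam_1$ after a harmless translation), the goal becomes to show that $\Lam_n$ fails to embed in a spectrum for some $n$.

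The heart of the argument is a compactness step, carried out by contradiction: suppose every $\Lam_n$ embeds in a spectrum, i.e.\ (by the reformulation) $Q + \Lam_n$ extends to a tiling $Q + T_n$. Each $T_n$ is a locally finite point set whose packing bound limits the number of its points in any ball $B_R$, so by a diagonal extraction over $R \to \infty$ I can pass to a subsequence along which $T_n$ converges locally to a point set $T_\infty$. Because non-overlap is a closed condition and almost-everywhere covering passes to local limits of tilings by a fixed tile with null boundary, $Q + T_\infty$ is again a tiling, hence $T_\infty$ is a spectrum. Moreover every fixed $\lam \in \Lam$ lies in $\Lam_n \sbt T_n$ for all large $n$, and a point occupying the exact location $\lam$ in all these $T_n$ survives in the local limit, so $\lam \in T_\infty$; thus $\Lam \sbt T_\infty$, contradicting the previous paragraph. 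Therefore some $\Lam_n$ embeds in no spectrum, and this finite orthogonal set is the required example, simultaneously in every dimension $d \ge 3$ in which a maximal incomplete $\Lam$ exists.

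The step I expect to be the main obstacle is making this compactness rigorous when the cube translations range over all of $\R^d$ rather than over a lattice: one must fix the topology on configurations (local Hausdorff convergence of the point sets $T_n$, equivalently the standard tiling-space topology), verify that the class of tilings by translates of $Q$ is closed under local limits, and check that a cube pinned at a fixed position persists in the limit. These are standard facts in tiling theory but require care to state correctly; everything else is bookkeeping built on the constructions of Sections \ref{sec:thick}--\ref{sec:higher}.
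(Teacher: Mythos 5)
Your proposal is correct and follows essentially the same route as the paper: truncate the maximal incomplete set $\Lam$ into finite pieces, assume each piece embeds in a spectrum, and pass to a weak/local limit of those spectra to embed all of $\Lam$ in a spectrum, contradicting its maximality and incompleteness. The only difference is in how the compactness step is justified --- you route it through the tiling characterization of cube spectra and local limits of tilings, whereas the paper directly cites the known fact (from \cite{greenfeld2016spectrality}, \cite{kolountzakis2016cylinders}) that a sequence of spectra of the cube has a weakly convergent subsequence whose limit is again a spectrum.
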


\begin{proof}
By our previous results  there exists 
a maximal but incomplete
orthogonal set $\Lam$ 
for the unit cube.
For each $n$ define
$\Lam_n := \Lam \cap [-n,n]^d$.
The set $\Lam_n$ is finite for every $n$.
We claim that there is $n$
such that
the set $\Lam_n$  cannot be embedded 
as a subset of any spectrum.
Indeed, if this is not the case then 
each $\Lam_n$ is contained in some
 spectrum $\Gam_n$ of the cube.
 It is known, 
 see \cite[Section 3]{greenfeld2016spectrality},
 \cite{kolountzakis2016cylinders},
 that there exists a weakly
  convergent subsequence
  $\Gam_{n_j}$ whose weak limit
  $\Gam$ is also a spectrum for the cube.
 We have $\Lam_m \sbt \Lam_{n_j} \sbt \Gam_{n_j}$
 for $n_j > m$, and letting $j \to +\infty$
 it follows that the weak limit $\Gam$
 contains the set $\Lam_m$. Since $m$
 is arbitrary we conclude that
 $\Lam \sbt \Gam$, that is, $\Lam$ can be embedded in a spectrum, in contradiction to its maximality and incompleteness.
\end{proof}

% ================================================

\section{Spectral sets other than the cube}
\label{sec:no-cube}

As we saw in the introduction, in the case of the cube in dimension $1$ (i.e.\ the unit interval), any maximal orthogonal set of frequencies necessarily forms a spectrum. A natural question arises whether this is true for any spectral set $\Om \subset \R$. Below, we utilize an idea of the previous sections to show that this is not the case.

\begin{thm}
\label{thm:intervals}
There exists a spectral set $H \sbt \R$ (a union of finitely many intervals with integer endpoints) which admits a maximal but incomplete orthogonal set.
\end{thm}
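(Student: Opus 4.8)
The plan is to build a one-dimensional spectral set $H \subset \R$ (a finite union of intervals with integer endpoints) by emulating the three-dimensional mechanism of Section~\ref{sec:thick} inside a single dimension. The key observation is that the zero set $G$ in~\eqref{zero-set} owes its product structure to the fact that the cube is a product; but a general spectral set $H$ has a Fourier transform $\ft{\1}_H$ whose zero set can be arranged to have a richer "multi-channel" structure that plays the role of the three coordinate-plane conditions. The idea is to let $H$ be a disjoint union of several unit intervals, placed at integer positions, so that $\ft{\1}_H(\xi) = \ft{\1}_{[0,1]}(\xi) \cdot P(e^{2\pi i \xi})$ for some trigonometric polynomial $P$ coming from the translation structure. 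The zeros then come in two families: those of $\ft{\1}_{[0,1]}$, namely $\xi \in \Z \setminus \{0\}$, and those of $P$, which we can design to produce a set of "forbidden differences" that mimics the interplay of the three lines and the integer lattice in the cube construction.

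First I would choose $H$ so that it is spectral with a known spectrum (for instance, a set that tiles $\R$ by integer translates, making $\Z$ or an explicit modification of it a spectrum), guaranteeing the existence of \emph{some} complete orthogonal set. Concretely, I expect to take $H$ to be a union like $[0,1] \cup [2,3] \cup \cdots$ or a more carefully chosen integer-endpoint pattern whose indicator's Fourier transform has an enlarged zero set. Second, I would compute $\ft{\1}_H$ explicitly and identify its full zero set $Z$, separating it into the "interval part" $\Z \setminus \{0\}$ and the "polynomial part" coming from the roots of $P(e^{2\pi i \xi})$; the latter will be a union of arithmetic-progression-like sets of real numbers, i.e.\ several translated copies of a lattice, which is exactly the structure that let the three-dimensional argument create extra orthogonal directions. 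Third, mimicking the explicit set $\Lam = A \cup B$ from Section~\ref{sec:thick}, I would write down a concrete orthogonal set $\Lam \subset \R$ consisting of a "bulk" part contributing the generic integer frequencies together with a small "decoration" $A$ built from non-integer points lying in the polynomial-zero family, chosen so that $A$ blocks every possible extension.

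The main verification has two halves, parallel to the cube case. For \emph{orthogonality}, I would check that $\Lam - \Lam \subset Z \cup \{0\}$, which reduces to showing each pairwise difference lands either in $\Z \setminus \{0\}$ or in a zero of $P$; this is a routine but careful bookkeeping step. For \emph{maximality}, I would suppose a point $s$ satisfies $s - \Lam \subset Z$ and run the same elimination argument as in Lemma~\ref{lem:DBZ} and the subsequent case analysis: testing $s$ against the bulk forces $s$ to lie in a constrained position, and then testing against the decoration $A$ rules out every remaining possibility, yielding a contradiction. For \emph{incompleteness}, the cleanest route is to exhibit a nonzero function in $L^2(H)$ orthogonal to all of $E(\Lam)$, or equivalently to show that the packing/tiling analogue fails; since $\Lam$ is designed to omit a positive-density chunk of frequencies near $0$, I expect a direct dimension or counting argument to show $E(\Lam)$ cannot span $L^2(H)$.

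The hard part will be the translation of the "three coordinate planes" structure into one dimension: in $\R^3$ the three independent channels came for free from the three coordinates, whereas in $\R$ I must manufacture the analogous independence purely from the roots of the trigonometric polynomial $P$. The delicate point is to choose $H$ so that $P$ has \emph{enough} zeros, arranged in enough distinct translated lattices, that the maximality blocking argument has the same combinatorial backbone as Lemma~\ref{lem:DBZ}, while simultaneously keeping $H$ genuinely spectral (so that a complete set exists and the contrast with the maximal incomplete set is meaningful). I anticipate that the correct choice is a union of a small number of unit intervals whose gaps encode the parameters $\al, \be, \gam$; once $H$ is fixed, the orthogonality and maximality checks should follow the template already established, and the only real creativity lies in pinning down $H$ and the decoration set so that all three required properties hold simultaneously.
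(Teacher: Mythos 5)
Your high-level plan matches the paper's: take $H$ to be a union of unit intervals at integer positions, exploit the factorization $\ft{\1}_H(\xi)=\ft{\1}_{[0,1)}(\xi)\,P(e^{2\pi i\xi})$, and import the three-dimensional blocking mechanism. But the proposal stops exactly at the two points where the actual work lies, and both are genuine gaps rather than routine verifications. First, you never produce the set $H$. The paper manufactures the ``three independent channels'' by the Chinese Remainder Theorem: with $N=p^2q^2r^2$ for distinct odd primes $p<q<r$, the isomorphism $\Z_{p^2}\times\Z_{q^2}\times\Z_{r^2}\cong\Z_N$ lets one place a discrete cube $H_0=\{0,\dots,p-1\}\times\{0,\dots,q-1\}\times\{0,\dots,r-1\}$ inside $\Z_N$ and carry over (verbatim) the thin maximal incomplete set of \secref{sec:thin}, with ``nonzero integer'' replaced by ``nonzero multiple of $p$ (resp.\ $q$, $r$)''. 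This is the creative step you explicitly defer (``the only real creativity lies in pinning down $H$''), so the proposal is a program, not a proof.

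Second, and more subtly, even granting a maximal incomplete orthogonal set $\Lam_0'$ in the finite group $\Z_N$, transferring maximality to $L^2(H)$ requires ruling out extensions by real frequencies $x\notin\frac1N\Z$. In your framework nothing prevents the polynomial factor $P(e^{2\pi i\xi})$ from vanishing at irrational points of $\T$, and any such zero would create candidate extensions invisible to the finite-group elimination argument. The paper's Lemma~\ref{few-zeros} addresses precisely this: because $q^2r^2p+p^2r^2q+p^2q^2r<N$, the integer representatives of $\phi(H_0)$ all lie in $\{0,\dots,N-1\}$, so $\ft{\1}_{\phi(H_0)}(\xi/N)$ factors as a product of three geometric sums whose zeros force $\xi\in\Z$. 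Without an analogue of this lemma your maximality argument only establishes maximality within $\frac1N\Z+\Z$, not within $\R$. (Your incompleteness argument by density is fine and matches the paper's counting $|\Lam_0|<pq+qr+rp<pqr=|H_0|$.)
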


We motivate the concrete example given below by a short informal discussion. We are looking for a set $H\subset \R$ such that $H$ is spectral, but there exists a maximal orthogonal set of exponentials in $L^2(H)$ which is not complete. As Fuglede's conjecture is still open in $\R$, we may as well assume that $H$ tiles $\R$ by translations. If so, it is known that any tiling of $\R$ by a bounded region $H$ is periodic
and is essentially equivalent to a tiling of $\Z$
(see \cite{lagarias1996tiling}).
 Furthermore, any tiling of $\Z$ is known to arise as a periodic extension of a tiling of a finite cyclic group $\Z_N = \Z / N\Z$ (see \cite{newman1977tesselations}). 
  It is therefore natural to look for the set $H$ in the form $H=\cup_{j=1}^k [h_j, h_j+1)$, where the set $H_0=\{h_1, \dots, h_k\}\subset \{0, 1, 2, \dots, N-1\}$ is a tile (and a spectral set) in the group $\Z_N$ (where we have identified $\Z_N$ with the set $\{0, 1, 2, \dots, N-1\}$ in the natural way).

However, we note that if the cardinality of the set $H_0$ is a prime number,  then the structure theory of tiling of finite abelian groups indicates that any maximal orthogonal set for $H_0$ is necessarily a spectrum. Presumably the same holds if the number of elements of the set $H_0$ is a prime power, or even a product of two prime powers,
see \cite{coven1999tiling}.
This suggests that we should be looking for a more complicated example.

In the concrete choice of $N$ and the set $H_0\subset \Z_N$ we shall therefore mimic the ``thin'' example of \secref{sec:thin}, given in dimension $3$ for the cube. Let $p<q<r$ be distinct odd primes, and let $N=p^2q^2r^2$. Then $\Z_N$ is isomorphic to the product group $\Z_{p^2} \times \Z_{q^2} \times \Z_{r^2}$, with a group isomorphism being the mapping $\phi:\Z_{p^2} \times \Z_{q^2} \times \Z_{r^2} \to \ZZ_N$ given by
\begin{equation}
\label{eq:iso}
\phi(a, b, c) = q^2r^2 a + p^2r^2 b + p^2q^2 c
\end{equation}
(clearly  this mapping is  a
  homomorphism, and one can easily check that 
  it has a trivial kernel,
    hence it is  an isomorphism).

Let us now consider the ``discrete cube'' $H_0 \subset \Z_{p^2} \times \Z_{q^2} \times \Z_{r^2}$ given by
\begin{equation}
\label{eq:h0}
H_0 = \{(a,b,c): \ 0\le a \le p-1, \ 0\le b \le q-1, \ 0\le c \le r-1\}.
\end{equation}
The Fourier transform $\ft{\1}_{H_0}$ is
 a function on the dual group, which we can 
 also identify with
  $\Z_{p^2} \times \Z_{q^2} \times \Z_{r^2}$.
The zeros of $\ft{\1}_{H_0}$ are then the vectors 
whose first coordinate is a nonzero multiple of $p$, or the second coordinate is a nonzero multiple of $q$, or the third coordinate is a nonzero multiple of $r$. As a consequence, $H_0$ is a spectral set in the group $\Z_{p^2} \times \Z_{q^2} \times \Z_{r^2}$ and the set
$\Gamma_0=\{(u, v, w): \ p|u, \ q|v, \ r|w\}$  serves as a spectrum for $H_0$. Indeed, $\Gam_0$ is an orthogonal set of frequencies for $H_0$ and it has the same cardinality as $H_0$  (see also \cite{agora2018spectra}).

We will now exhibit a maximal but incomplete set of frequencies for $H_0$. We let $\Lambda_0\subset \Z_{p^2} \times \Z_{q^2} \times \Z_{r^2}$ be the set of all vectors that have one of the following forms:
\begin{equation}
(0,0,0), \quad (n, 1-k, 1),  \quad (1, k, 1-m), \quad (1-n, 1, m)
\end{equation}
where $n$ is a nonzero multiple of $p$, $k$ is a nonzero multiple of $q$ and $m$ is a nonzero multiple of $r$. 
(Notice that these elements of $\Lam_0$
are analogous to those in \eqref{eq:thinpoints}.)
It is then straightforward to check that $\Lambda_0$ forms an orthogonal set of frequencies for $H_0$, and one can repeat the proof of 
\secref{sec:thin} verbatim to show that $\Lambda_0$ is maximal. It is also clear that $\Lambda_0$ is not a spectrum of $H_0$, since we have $|H_0|=pqr$ while
\begin{equation}
|\Lambda_0| < pq+qr+rp < 3 q r \le pqr.
\end{equation}

Via the isomorphism \eqref{eq:iso} we deduce that $\phi(H_0) \subset \ZZ_N$ is a spectral set which has a maximal, incomplete set of frequencies $\Lambda_0'$. We view $\Lambda_0'$ as a subset of $\ZZ_N$ via the identification of $\ZZ_N$ with its dual group.

Finally, let $H \sbt \R$ be the set defined by $H=\cup_{h \in \phi(H_0)} [h, h+1)$, or equivalently, 
$\1_H = \1_{\phi(H_0)} \ast \1_{[0, 1)}$. This implies that the zero set of the Fourier transform $\ft{\1}_H$ satisfies
\begin{equation}
\label{new-zeros}
  \big(\{\ft{\1}_H = 0\} \cup \{ 0\}\big) 
\cap \frac{1}{N}\ZZ
 = \frac{1}{N}\big( 
 \{\ft{\1}_{\phi(H_0)}=0 \}  \cup \{ 0\} \big) + \ZZ,
\end{equation}
where we view $\{\ft{\1}_{\phi(H_0)}=0 \}$ as a subset of $\{0, 1, 2, \ldots, N-1\}$. Define also
\begin{equation}
\Lambda = \frac{1}{N}\Lambda_0'+\ZZ
\end{equation}
and observe that this is an orthogonal set for $H$ because of \eqref{new-zeros}. For the same reason we have that if $\Gam'_0$ is a spectrum of $\phi(H_0)$ in $\ZZ_N$ then $\Gam = \frac1{N} \Gam'_0 +\ZZ$ is a spectrum of $H$ in $\RR$. The orthogonality of $\Gam$ follows from \eqref{new-zeros} and the completeness follows from (a) having the right density and (b) being periodic, as (a) and (b) together imply that the ``packing'' $|\ft{\1}_H|^2+ \Gam$ is actually a tiling
(see \cite[Section 2]{kolountzakis2024orthogonal}).

It remains to show the maximality of $\Lambda$ (the incompleteness of $\Lam$ is guaranteed since its density is too small). Assume   that $\Lambda \cup \{x\}$ is an orthogonal set of frequencies for $H$, where $x \in \RR\setminus\Lambda$. By the $\ZZ$-periodicity of $\Lambda$ we may assume that $x \in [0, 1)$.
Now, if we knew that 
$x=k/N$ for some $k\in\{0,1, 2, \ldots, N-1\} \setminus\Lambda'_0$, then again by
\eqref{new-zeros} this would
imply that $\Lambda'_0 \cup \{k\}$ is an orthogonal set of frequencies for $\phi(H_0)$ in $\ZZ_N$, which contradicts the maximality of $\Lambda_0'$.
To conclude the proof we  therefore need to rule out the possibility that $x$ is a real number in $[0,1)$ which is not of the form $k/N$.

To this end we will need the following lemma.

\begin{lemma}
\label{few-zeros}
If $H_0 \subset \ZZ_{p^2}\times\ZZ_{q^2}\times\ZZ_{r^2}$ is the set defined by \eqref{eq:h0} and $\phi$ is given by \eqref{eq:iso}
then, viewing the set $\phi(H_0)
\sbt \{0, 1, 2, \dots, N-1\}$
 as a subset of the integer group $\Z$, the Fourier transform of its indicator function (viewed as a function on the dual group $\TT =  \R / \Z$) has no zeros outside the set
\begin{equation}
\frac{1}{p^2q^2r^2}\ZZ.    
\end{equation}
\end{lemma}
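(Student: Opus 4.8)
The plan is to compute $\ft{\1}_{\phi(H_0)}$ explicitly and to observe that it factors as a product of three one-dimensional exponential sums, each of which can only vanish at points of $\frac{1}{N}\ZZ$.

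First I would record the structural fact that $\phi(H_0)$, viewed as a subset of $\{0,1,\dots,N-1\}$, is a genuine (collision-free in $\Z$, not merely modulo $N$) direct sum
\begin{equation*}
\phi(H_0) = \Set{q^2r^2 a + p^2r^2 b + p^2q^2 c : 0 \le a \le p-1,\ 0 \le b \le q-1,\ 0 \le c \le r-1}.
\end{equation*}
The point is not just that $(a,b,c)\mapsto q^2r^2 a + p^2r^2 b + p^2q^2 c$ is injective modulo $N$ — that is merely the statement that $\phi$ is an isomorphism — but that no reduction modulo $N$ occurs, so the displayed integers are already the canonical representatives in $\{0,\dots,N-1\}$. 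This is where the hypothesis that $p<q<r$ are odd primes enters: the largest value of the sum is $q^2r^2(p-1)+p^2r^2(q-1)+p^2q^2(r-1)$, which upon division by $N=p^2q^2r^2$ equals $\frac{p-1}{p^2}+\frac{q-1}{q^2}+\frac{r-1}{r^2} < \frac1p+\frac1q+\frac1r \le \frac13+\frac15+\frac17 < 1$. Hence every such sum lies in $[0,N-1]$ and no wraparound takes place.

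With the collision-free representation in hand, the Fourier transform on $\T=\R/\Z$ factors as
\begin{equation*}
\ft{\1}_{\phi(H_0)}(\xi) = \Big(\sum_{a=0}^{p-1} e^{-2\pi i q^2r^2 a \xi}\Big)\Big(\sum_{b=0}^{q-1} e^{-2\pi i p^2r^2 b \xi}\Big)\Big(\sum_{c=0}^{r-1} e^{-2\pi i p^2q^2 c \xi}\Big).
\end{equation*}
Each factor is a finite geometric sum $\sum_{j=0}^{m-1} z^j$ with $z=e^{-2\pi i M\xi}$, which vanishes only when $z\neq 1$ and $z^m=1$, i.e.\ only at points $\xi$ with $mM\xi\in\Z$. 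For the three factors this yields the vanishing conditions $pq^2r^2\xi\in\Z$, $p^2qr^2\xi\in\Z$, and $p^2q^2r\xi\in\Z$ respectively. Since each of $pq^2r^2$, $p^2qr^2$, $p^2q^2r$ divides $N$, each condition forces $\xi\in\frac1N\ZZ$. As a product vanishes precisely where one of its factors does, every zero of $\ft{\1}_{\phi(H_0)}$ lies in $\frac{1}{N}\ZZ=\frac{1}{p^2q^2r^2}\ZZ$, which is the assertion.

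The one step requiring genuine care — and the part I would regard as the crux — is the verification that the sum set is collision-free in $\Z$ rather than merely in $\Z_N$, since it is exactly the absence of wraparound that makes the Fourier transform factor cleanly; once that is secured, the remainder is the elementary zero analysis of a Dirichlet-type kernel.
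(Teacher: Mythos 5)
Your proposal is correct and follows essentially the same route as the paper: first verify that the integers $q^2r^2a+p^2r^2b+p^2q^2c$ all lie in $\{0,1,\dots,N-1\}$ so that no reduction modulo $N$ occurs (the paper bounds the maximum by $pqr(qr+pr+pq)<3pq^2r^2\le N$, you by $\tfrac1p+\tfrac1q+\tfrac1r<1$; both use that $p<q<r$ are odd primes), and then factor $\ft{\1}_{\phi(H_0)}$ into three geometric sums whose zeros are confined to $\frac1N\ZZ$. The only cosmetic difference is that the paper writes each factor in closed form as a ratio $\frac{1-e^{-2\pi i\xi/p}}{1-e^{-2\pi i\xi/p^2}}$ and reads off the zeros from there, while you argue directly from the vanishing condition for a finite geometric sum.
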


\begin{proof}
We must  show  that if 
 $\xi \in \RR$ and
 $\ft{\1}_{\phi(H_0)}(\xi/N)=0$ then $\xi \in\ZZ$.

 Let   $A = \{0, 1, \ldots, p-1\}$,
 $B = \{0, 1, \ldots, q-1\}$
 and  $C = \{0, 1, \ldots, r-1\}$
 viewed as subsets of the integers. 
 We make the following observation:
  if we interpret 
 the expression  \eqref{eq:iso}
 as an integer (and not as an
 element of $\Z_N$),
 then all the values attained by
 this expression for 
 $a \in A$, $b \in B$ and $c \in C$
 lie in the subset
 $\{0,1,2,\dots, N-1\}$
  of the integers. Indeed,
 all these values are nonnegative and
 not greater than
\ifdefined\SMART
\begin{align*}
   q^2r^2 p + p^2r^2 q + p^2q^2 r
      &= pqr(qr + pr + pq) \\
      &< pqr(3qr)\\
       &\le p^2 q^2 r^2\\
       &= N.
\end{align*}
\else
\begin{equation}
  q^2r^2 p + p^2r^2 q + p^2q^2 r
     = pqr(qr + pr + pq) 
     < pqr(3qr) \le p^2 q^2 r^2 = N.
\end{equation}
\fi
This means that as the triple $(a,b,c)$
 goes through the elements of
 $A \times B \times C$, then
  the expression  \eqref{eq:iso}
not only goes through the 
 elements of   $\phi(H_0)$
 viewed as congruence classes
 modulo $N$,  but it also
goes through the representatives 
 in  $\{0,1,2,\dots, N-1\}$
 (as a subset of $\Z$)
 of the  elements of the set
 $\phi(H_0)$.

This observation allows us to compute the 
Fourier transform
 $\ft{\1}_{\phi(H_0)}(\xi/N)$.
 Assume that $\xi \in \R$
 is not an integer, then we have
\begin{align}
\ft{\1}_{\phi(H_0)}(\xi/N) &= \sum_{a \in A} \sum_{b \in B} \sum_{c \in C} \exp(-2\pi i \phi(a, b, c)\xi/N) 
\label{eq:ftcalc1}\\
 &= \sum_{a \in A} \sum_{b \in B} \sum_{c \in C} \exp(-2\pi i (q^2r^2a+p^2r^2b+p^2q^2c)\xi/N)
 \label{eq:ftcalc2}\\
 &= \sum_{a=0}^{p-1} e^{-2\pi i a \xi/p^2}
 \sum_{b=0}^{q-1} e^{-2\pi i b \xi/q^2}
 \sum_{c=0}^{r-1} e^{-2\pi i c \xi/r^2}
\label{eq:ftcalc3}\\
 &= \frac{1-e^{-2\pi i \xi/p}}{1-e^{-2\pi i \xi/p^2}} \cdot \frac{1-e^{-2\pi i \xi/q}}{1-e^{-2\pi i \xi/q^2}} \cdot \frac{1-e^{-2\pi i \xi/r}}{1-e^{-2\pi i \xi/r^2}},
 \label{eq:ftcalc4}
\end{align}
 where the observation made above was used
 in 
 \eqref{eq:ftcalc1} and \eqref{eq:ftcalc2}.
It remains to notice that the   quantity
in \eqref{eq:ftcalc4}
is nonzero, as otherwise $\xi$ would have to be an integer divisible by $p$, $q$ or $r$. Hence the lemma
is established.
\end{proof}

Finally, we use the lemma to
finish the proof of the 
 maximality of $\Lambda$.  Recall that we have
 assumed that $\Lambda \cup \{x\}$ is an orthogonal 
 set  of frequencies for $H$, where $x$ is a
 real number in $[0,1) \setminus\Lambda$. 
Since $0 \in \Lambda$, Lemma \ref{few-zeros} 
allows us to conclude that $x=k/N$ for some 
$k\in\{1, 2, \ldots, N-1\}\setminus\Lambda'_0$. 
As we have seen,
 this implies that $\Lambda'_0 \cup \{k\}$ is 
 an orthogonal set of frequencies
 for $\phi(H_0) \sbt \ZZ_N$,
  contradicting the maximality of $\Lambda_0'$. 

\subsection*{Note added in proof} After submitting our paper we learned that maximal orthogonal sets for the unit cube in low dimensions were studied in the paper \cite{lysakowska2011structure}, where they are referred to as sets which determine an unextendible system of cubes. In particular \cite[Theorem 5]{lysakowska2011structure} gives a complete structural description of the maximal incomplete orthogonal sets for the unit cube in three dimensions. One can infer from this result a negative answer to Problem 4.2.

% ================================================

\end{document}